\date{}
\title{\vspace{-0.8cm}On the Tur\'an number of ordered forests}
\author{
D\'aniel Kor\'andi \thanks{Institute of Mathematics, EPFL, Lausanne, Switzerland. Research supported in part by SNSF grants 200020-162884 and 200021-175977. Emails: \{daniel.korandi,istvan.tomon\}@epfl.ch.}
\and
G\'abor Tardos \thanks{R\'enyi Institute, Budapest, Hungary. Research supported in part by the ``Lend\"ulet'' project of the Hungarian Academy of Sciences and by the National Research, Development and Innovation Office, NKFIH, projects K-116769 and SNN-117879. Part of this research was done while visiting EPFL. Email: tardos@renyi.hu.}
\and
Istv\'an Tomon \footnotemark[1]
\and
Craig Weidert \thanks{Email: craig.weidert@gmail.com}
}
\theoremstyle{plain}
\newtheorem{theorem}{Theorem}[section]
\newtheorem{corollary}[theorem]{Corollary}
\newtheorem{lemma}[theorem]{Lemma}
\newtheorem{conjecture}[theorem]{Conjecture}
\theoremstyle{remark}
\newtheorem*{remark}{Remark}
\DeclareMathOperator{\ex}{ex}
\newcommand{\eps}{\varepsilon}
\newcommand{\subs}{\subseteq}
\begin{document}

\maketitle
\sloppy

\begin{abstract}
An ordered graph $H$ is a simple graph with a linear order on its vertex set. 
The corresponding Tur\'an problem, first studied by Pach and Tardos, asks for the maximum number $\ex_<(n,H)$ of edges in an ordered graph on $n$ vertices that does not contain $H$ as an ordered subgraph.
It is known that $\ex_<(n,H) > n^{1+\eps}$ for some positive $\eps=\eps(H)$
unless $H$ is a forest that has a proper 2-coloring with one color class
totally preceding the other one.
Making progress towards a conjecture of Pach and Tardos, we prove that $\ex_<(n,H) =n^{1+o(1)}$ holds for all such forests that are ``degenerate'' in a certain sense. This class includes every forest for which an $n^{1+o(1)}$ upper bound was previously known, as well as new examples. Our proof is based on a density-increment argument.
\end{abstract}

\section{Introduction}
An \emph{ordered graph} $G$ is defined as a triple $(V,E,<)$ where $(V,E)$ is a simple graph and $<$ is a linear order on the vertex set $V$. We say that an ordered graph $H=(V',E',<')$ is an \emph{ordered subgraph} of $G$ if there is an order preserving embedding of $V'$ into $V$ that maps edges to edges. If $G$ does not contain $H$ as an ordered subgraph then we say that $G$ is \emph{$H$-free}.

The following Tur\'{a}n-type question arises naturally: For a fixed ordered graph $H$, what is the maximum number of edges that an $H$-free ordered graph on $n$ vertices can have? This maximum, called the \emph{extremal} or \emph{Tur\'an number} of $H$, is denoted by $\ex_{<}(n,H)$. The systematic study of extremal numbers was initiated by Pach and Tardos in \cite{PT06}. 

\medskip
For usual (unordered) graphs, the Erd\H{o}s-Stone-Simonovits theorem says that the extremal number of a graph is controlled by its chromatic number.
As it turns out, ordered graphs exhibit a similar phenomenon. The \emph{interval chromatic number} $\chi_<(H)$ of an ordered graph $H=(V,E,<)$ is the smallest integer $r$, such that $V$ can be split into $r$ intervals (i.e., sets of consecutive vertices in the ordering) such that no edge of $H$ has both endpoints in the same interval. It is not hard to show (see \cite{PT06})
that $\ex_<(n,H)= \left(1-\frac{1}{\chi_<(H)-1}\right)\binom{n}{2} + o(n^2)$. This asymptotically determines the extremal number of $H$ when $\chi_<(H)\ge 3$. However, much like for usual graphs, the problem becomes more difficult when $\chi_<(H)=2$.

Our work focuses on the general problem of classifying ordered graphs that have close to linear extremal numbers, i.e., satisfy $\ex_<(n,H)=n^{1+o(1)}$. Note that if $H$ contains some cycle of length $k$ then $\ex_<(n,H)\ge \Omega(n^{1+1/k})$. Indeed, it is well-known (see, e.g., \cite{BBOOK}) that there are $n$-vertex (unordered) graphs with $\Omega(n^{1+1/k})$ edges that do not contain any $k$-cycle.\footnote{For odd $k$ we even have $\ex_<(n,H)=\Omega(n^2)$.} So if $\ex_<(n,H)=n^{1+o(1)}$ holds then $H$ is acyclic with interval chromatic number 2.  Pach and Tardos conjectured that the converse holds, as well. More precisely, they made the following stronger conjecture:

\begin{conjecture}[Pach--Tardos \cite{PT06}] \label{conj:main}
Let $H$ be an acyclic ordered graph such that $\chi_<(H)=2$. Then
\[ \ex_<(n,H) = n(\log n)^{O(1)}.\]
\end{conjecture}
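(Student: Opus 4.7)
The plan is to work by induction on $|E(H)|$. Since $\chi_<(H)=2$, the vertices of $H$ split into intervals $A < B$ with all edges going between them. By treating connected components separately and summing the extremal numbers, we may assume $H$ is a tree. Pick a leaf $v$ of $H$ with unique neighbor $u$, and set $H' = H - v$. By induction, $\ex_<(n, H') \leq n (\log n)^{C'}$ for some constant $C'$. We want to boost this to a bound on $\ex_<(n, H)$.

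The central task is a leaf-extension lemma. Suppose $G$ is an $n$-vertex ordered graph with $m \geq n (\log n)^{C}$ edges that is $H$-free, where $C$ is chosen much larger than $C'$. Then $G$ contains many copies of $H'$, by induction together with a standard supersaturation-type averaging argument. For each embedding $\phi$ of $H'$ into $G$, the vertex $\phi(u)$ must have no ``extension'' neighbor in the prescribed interval $I_\phi \subseteq V(G)$ determined by the positions, relative to the image of $v$, of the other $\phi(x)$'s. Our goal is to derive a contradiction from this.

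To exploit this, I would use a density-increment argument. Partition $V(G)$ dyadically into intervals at scales $n, n/2, n/4, \ldots$, and for each ordered pair of intervals consider the induced bipartite ordered subgraph. A careful averaging should show that if the extension step fails for too many copies of $H'$, then a non-trivial fraction of the ``wasted'' edges of $G$ concentrate in one such dyadic bipartite piece, yielding a denser $H'$-free ordered bipartite graph on fewer vertices. Iterating this for $O(\log n)$ rounds should reduce to a base case that is handled directly, producing an $n(\log n)^{O(1)}$ bound.

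The main obstacle, and the reason the conjecture is open in full generality, is controlling the interaction between the ordering and the density-increment step. The extension interval $I_\phi$ depends on the embedding $\phi$ in an intricate way: essentially, $v$'s position in the ordering of $H$ dictates which pairs of already-embedded vertices must bracket its image in $G$. Showing that failure to extend really does produce a density increment, rather than dissipating across many incompatible intervals, is the heart of the matter. I expect this is exactly where the ``degenerate'' hypothesis of the paper enters: it presumably ensures that the extension constraint is structurally compatible with dyadic decomposition. A full proof of the conjecture likely needs either a cleverer (possibly non-uniform) decomposition adapted to the shape of $H$, or a combinatorial reduction that strips vertices off $H$ one at a time while keeping the ordered structure sufficiently well-behaved for induction.
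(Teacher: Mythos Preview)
This statement is a \emph{conjecture}, and the paper does not prove it; it remains open. What the paper establishes is the weaker bound $\ex_<(n,H)=n^{1+o(1)}$ (not $n(\log n)^{O(1)}$), and only for the restricted class of \emph{vertically degenerate} patterns. So there is no ``paper's own proof'' to compare against, and your proposal is, as you yourself concede in the final paragraph, not a proof either but a sketch with an identified gap.

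That said, it is worth contrasting your outline with what the paper actually does for its partial result. Your plan is induction on $|E(H)|$ via leaf deletion: remove a leaf $v$, embed $H'=H-v$, then extend. The paper does \emph{not} proceed this way. The inductive step in the paper is a \emph{vertical separation}: the pattern is cut by a horizontal line into an upper part $P'$ and a lower part $P''$ that share at most one column with 1-entries, and the two parts are embedded separately into disjoint horizontal blocks and then glued (Lemma~\ref{lem:combine}). This is a much coarser decomposition than peeling off one leaf at a time, and it is precisely the existence of such a separation at every stage that defines ``vertically degenerate''. The density-increment machinery (Lemma~\ref{main} and Section~\ref{sec:clean}) is built around $(k,u)$-complete submatrices and block-respecting embeddings, not around dyadic interval decompositions of the vertex set.

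Your leaf-extension idea is closer in spirit to the operation behind inequality~\eqref{eq:col}, which handles the special case where the leaf sits in a new extremal column. The obstruction you name---that the extension interval $I_\phi$ depends on $\phi$ in a way that need not align with any fixed decomposition---is real and is exactly why \eqref{eq:col} alone does not settle the conjecture. The paper's vertical-separation framework sidesteps this by never trying to extend a single vertex into a constrained interval; instead it combines two already-complete embeddings along a single shared column. But this only works when such separations exist all the way down, which is the degeneracy hypothesis. For non-degenerate acyclic patterns (such as the matrix $R$ in Section~\ref{sec:remarks}) neither your approach nor the paper's currently gives anything.
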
  

In this paper we make some progress towards proving this conjecture by showing that $\ex_<(n,H)=n^{1+o(1)}$ holds for a large class of ordered forests $H$.
\medskip

It will be more convenient to state our result in the language of pattern-avoiding matrices. Let us now describe the analogous problem in this context. A 0-1 matrix $A$ is a matrix with all entries from $\{0,1\}$. Its \emph{weight}
$w(A)$ is the number of 1-entries in it. We say that $A$ \emph{contains} another 0-1 matrix $B$ if either $B$ is a submatrix of $A$ or it can be obtained from a submatrix of $A$ by changing some 1-entries to 0-entries. In other words, $A$ contains $B$ if we can get $B$ by deleting some rows, columns and 1-entries from $A$. 
We denote this relation by $B\prec A$. A pattern in our context is just a fixed 0-1 matrix $P$, and the corresponding Tur\'an-type problem is then to maximize the weight of an $n$-by-$n$ 0-1 matrix that does not contain $P$. Let $\ex(n,P)$ denote this maximum weight. Note that this extremal function is not defined for patterns with zero weight.

We can think of a pattern $P$ as the bipartite adjacency matrix of some
ordered graph $H_P$ of interval chromatic number 2, where the order of the
vertices is inherited from the order of the corresponding rows and columns of
$P$, and row vertices precede column vertices. Then $\ex(n,P)$
translates to the maximum number of edges in an $H_P$-free ordered graph $G$
on $2n$ vertices, such that all edges of $G_A$ connect
the first $n$ vertices to the last $n$ vertices.\footnote{For equality to hold, we actually need
the mild extra assumption that there is a 1-entry in the last row of $P$
and also in the first column of $P$. Otherwise $H_P=H_Q$ for some pattern
$Q\ne P$, so avoiding $H_P$ in the ordered graph means avoiding both $P$
and $Q$ in the 0-1 matrix.} If not for this last condition, this would be the exact same quantity as $\ex(2n,H_P)$. And indeed, as it was observed in \cite{PT06}, the two functions are closely related:
\begin{equation} \label{eq:equiv}
  \ex(\lfloor n/2\rfloor,P) - O(n) \le \ex_{<}(n,H_P) = O(\ex(n,P)\log n).
\end{equation}

Let us call a pattern $P$ \emph{acyclic} if $H_P$ does not contain any cycle. It is easy to see that $P$ is acyclic if and only if it contains no submatrix $P'$ such that every row and column of $P'$ contains at least two 1-entries. Equation \eqref{eq:equiv} shows that Conjecture~\ref{conj:main} can be stated in the following, equivalent form:
 
\begin{conjecture}[Pach--Tardos \cite{PT06}] \label{conj:mainmx}
Let $P$ be an acyclic pattern. Then $\ex(n,P)=n(\log n)^{O(1)}$.
\end{conjecture}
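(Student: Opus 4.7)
The plan is to attack Conjecture \ref{conj:mainmx} by induction on the weight $w(P)$, combined with the density-increment strategy signalled in the abstract. The base case $w(P) \le 1$ is trivial. For the inductive step the crucial structural observation is that, since $H_P$ is a forest, it must contain a leaf, which translates into $P$ having either a row or a column with exactly one 1-entry. Without loss of generality assume this is a column $c$ whose single 1-entry lies in row $r$, and let $P'$ denote the pattern obtained from $P$ by deleting column $c$. Then $P'$ is still acyclic, has weight $w(P)-1$, and the inductive hypothesis gives $\ex(n,P') \le n(\log n)^{C}$ for some constant $C=C(P')$.

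To close the induction I would run the following density-increment. Suppose for contradiction that $A$ is an $n\times n$ 0-1 matrix avoiding $P$ with $w(A) \ge n(\log n)^{C+K}$ for $K$ a sufficiently large constant depending on $P$. I would dyadically partition the columns of $A$ into $O(\log n)$ intervals; each interval is viewed as a potential location for the image of column $c$ in a hypothetical embedding of $P$. A key claim to establish is that in each interval $I$, after greedily discarding a single 1-entry from every row (a loss of only $O(n)$ in weight), the matrix $A$ restricted to the ``opposite side'' of $I$, equipped with the rows that still have a 1 inside $I$, must avoid $P'$; otherwise one would extend an embedding of $P'$ by any of the removed 1-entries of $I$ to get an embedding of $P$ in $A$. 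Applying the inductive bound on $\ex(\cdot,P')$ to each of these restricted matrices and summing over the $O(\log n)$ intervals would then contradict the assumed weight of $A$, losing only a polylogarithmic factor per inductive step.

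The main obstacle, which I expect is the reason Conjecture \ref{conj:mainmx} is still open in full generality, is that the leaf column $c$ need not be either the first or the last column of $P$. If $c$ sits in the middle, then the residual pattern $P'$ splits into a left part $P'_L$ and a right part $P'_R$ that must be embedded on opposite sides of the witness column, and one is really forced to control an extremal quantity for the \emph{ordered pair} $(P'_L,P'_R)$ rather than for $P'$ alone. A naive inductive bound on $\ex(\cdot,P')$ does not directly supply what is needed, and decoupling this left/right constraint cleanly is the crux. I would therefore expect the paper to make exactly this idea work for a restricted ``degenerate'' subclass of acyclic patterns where either a suitable leaf can be chosen at the boundary of $P$, or where an auxiliary degeneracy hypothesis lets one bound the pair $(P'_L,P'_R)$ by an independent product of bounds for $P'_L$ and $P'_R$.
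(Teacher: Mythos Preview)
This statement is a \emph{conjecture}; the paper does not prove it, and it remains open. What the paper actually establishes is the weaker Theorem~\ref{thm:main}: $\ex(n,P)\le n^{1+o(1)}$ (not $n(\log n)^{O(1)}$) for the restricted subclass of \emph{vertically degenerate} acyclic patterns. So there is no ``paper's own proof'' of Conjecture~\ref{conj:mainmx} to compare against.

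Your proposal is not a proof, and you have correctly located the gap yourself. The case where the leaf column is the last (or first) column of $P$ is exactly the content of~\eqref{eq:col}, already proved by Pach and Tardos in~\cite{PT06}; your dyadic-interval argument is essentially their argument. The interior-leaf case is the genuine obstruction and is not handled by what you wrote: once the residual pattern splits into $P'_L$ and $P'_R$ on either side of the witness column, avoiding $P$ does not force either side to avoid $P'_L$ or $P'_R$ separately, so the inductive bound on $\ex(\cdot,P')$ gives you nothing. Patterns like the ``pretzel'' $R$ in Section~\ref{sec:remarks} illustrate that no leaf can ever be peeled from the boundary, so leaf-removal alone cannot resolve the conjecture.

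The paper's partial result is \emph{not} obtained by pushing your leaf-removal idea further. The decomposition is different: a pattern is \emph{vertically separable} if some horizontal cut divides it into an upper part and a lower part with at most one column meeting both, and the induction runs on the depth $s$ of such cuts (the ``class-$s$'' hierarchy), not on the weight. The density-increment step is also more involved than a dyadic column partition: one first passes to a $(k,u)$-complete submatrix in which every row has many 1-entries in each of $k$ designated column blocks, and then invokes an embedding lemma (Lemma~\ref{main}) that glues block-respecting embeddings of the upper and lower parts along the single shared column. This yields $\ex(n,P)\le n2^{O(\log^{s/(s+1)}n)}$, which is $n^{1+o(1)}$ but strictly worse than polylogarithmic, and only for vertically degenerate $P$.
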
  

A stronger variant of this conjecture had earlier been proposed by F\"{u}redi and Hajnal \cite{FH92}, who thought $\ex(n,P)=O(n\log n)$ holds for every acyclic pattern $P$. However, this conjecture was refuted by Pettie \cite{P11}, who constructed an acyclic pattern $P_{0}$ such that $\ex(n,P_{0})=\Omega(n\log n\log \log n)$.

\medskip
There are several patterns that are known to satisfy Conjecture~\ref{conj:mainmx}. For example, it is shown in \cite{PT06} that if $P$ is obtained from a pattern $P'$ by appending a new last column with a single 1-entry, then 
\begin{equation} \label{eq:col}
\ex(n,P)=O(\ex(n,P')\log n)
\end{equation}
holds, except for the trivial cases when $w(P)=0$ (and thus $\ex(n,P)$ is not defined) or $P=(1)$ is the 1-by-1 matrix of weight 1 (when $\ex(n,P)=0$). Using this and similar operations, the conjecture has been verified for a large family of matrices \cite{FH92,PT06,T05}. These include all patterns of weight up to 6, with essentially two exceptions (omitting 0-entries for clarity):
\begin{center}
\begin{tikzpicture}
\matrix[matrix of math nodes, left delimiter={(}, right delimiter={)},label=left:{$Q_1=$~~~}] (A) at (-3,0) {
  1 &   & 1 &   \\
  1 &   &   & 1 \\
    & 1 &   & 1 \\
};
\draw[thick,opacity=.3] (A-1-3.center) -- (A-1-1.center) -- (A-2-1.center) -- (A-2-4.center) -- (A-3-4.center) -- (A-3-2.center); 

\node at (-.5,0) {and};

\matrix[matrix of math nodes, left delimiter={(}, right delimiter={)},label=left:{$Q_2=$~~~}] (B) at (3,0) {
    & 1 &   & 1 \\
  1 &   & 1 &   \\
  1 &   &   & 1 \\
};
\draw[thick,opacity=.3] (B-1-2.center) -- (B-1-4.center) -- (B-3-4.center) -- (B-3-1.center) -- (B-2-1.center) -- (B-2-3.center); 
\end{tikzpicture}
\end{center}
In certain special cases, e.g., when $P$ is a permutation matrix \cite{MT04} or a double permutation matrix (obtained from a permutation matrix by doubling each column) \cite{G09}, even the stronger bound $\ex(n,P)=O(n)$ is known to hold.

\medskip
Let us now define a broad class of patterns that includes, up to transposing, all matrices that are known to satisfy Conjecture~\ref{conj:mainmx}. We say that an $l$-by-$k$ 0-1 matrix $P$ is \emph{vertically separable} if it can be cut along a horizontal line without separating the 1-entries in more than one column. In other words, if there exists $1\le a<l$ such that for all but at most one column $1\le y\le k$, we have $P(x,y)=0$ either for every $x\le a$ or for every $x>a$. In this case, we say that $P$ is \emph{separated} into the upper part induced by the first $a$ rows and the lower part induced by the last $l-a$ rows.

We call a matrix $P$ \emph{vertically degenerate} if it can be partitioned into its rows using vertical separations. Equivalently, $P$ is vertically degenerate if every submatrix $P'$ of $P$ either has a single row or is a vertically separable matrix. Note that vertically degenerate matrices are always acyclic.

The reader might find it helpful to visualize a pattern $P$ as a graph whose vertices are the 1-entries of $P$, and two 1-entries are connected if they are in the same row or column with only 0-entries between them (see $Q_1$ and $Q_2$ above). A pattern is acyclic if and only if this graph is acyclic. Vertically separable means that the matrix can be split into two parts along a horizontal line that cuts through at most one (vertical) edge. We consider this edge to be destroyed by the cut and this may make the way for subsequent cuts. A pattern is vertically degenerate if it can be split into its rows by applying a series of such cuts.

Our main result is the following theorem that says that every vertically degenerate pattern has close to linear extremal number. Some preliminary results from this work establishing $\ex(n,Q_1)\le n^{1+o(1)}$ have previously appeared in the Master's thesis of the fourth author \cite{W09}.

\begin{theorem} \label{thm:main}
Let $P$ be a vertically degenerate matrix. Then $\ex(n,P)\le n^{1+o(1)}$. 
\end{theorem}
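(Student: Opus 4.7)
I would prove the theorem by induction on the number of rows $l$ of $P$. The base case $l = 1$ is immediate: a single-row pattern with $w(P) = k$ 1-entries embeds into any row of weight at least $k$, so $\ex(n, P) \le (k-1)n$.

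For the inductive step, vertical separability lets us write $P = P_1 \mid P_2$, where $P_1$ is the top $a$ rows of $P$, $P_2$ is the bottom $l-a$ rows, and at most one column $y^*$ carries 1-entries in both parts. Since both $P_1$ and $P_2$ are themselves vertically degenerate submatrices of $P$, the induction hypothesis gives $\ex(n, P_i) \le n^{1+o(1)}$. Now fix an $n \times n$ matrix $A$ avoiding $P$. For each row index $r$, let $C_1(r)$ be the set of columns $c$ of $A$ such that $P_1$ embeds into the top $r$ rows of $A$ with the $y^*$-column of $P_1$ mapped to $c$; define $C_2(r)$ symmetrically, via the bottom $n-r$ rows and $P_2$. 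Since $A$ avoids $P$, we must have $C_1(r) \cap C_2(r) = \emptyset$ for every $r$, because otherwise the two embeddings would glue along their shared $y^*$-column to form a copy of $P$. Moreover, the top $r$ rows of $A$ restricted to columns in $[n] \setminus C_1(r)$ form a $P_1$-free matrix and hence have weight at most $\ex(n, P_1) = n^{1+o(1)}$; the analogous bound holds for $C_2(r)$.

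This sets up a density-increment step. Choose $r^* \approx n/2$ so that both halves of $A$ carry comparable weight $\approx w/2$. By disjointness, $|C_1(r^*)| + |C_2(r^*)| \le n$, so one side---say $|C_1(r^*)| \le n/2$---is small. Then the rectangle consisting of the top $r^*$ rows and the columns in $C_1(r^*)$ has area at most $n^2/4$ but carries weight at least $w/2 - n^{1+o(1)}$, giving density roughly $2w/n^2$, twice that of $A$. This rectangular submatrix still avoids $P$, so recursing (with the natural extension $\ex(m, n', P) \le \ex(\max(m, n'), P)$) gives a recursion of the form $\ex(n, P) \le 2\,\ex(n/2, P) + O(n^{1+o(1)})$, which solves to $\ex(n, P) = n^{1+o(1)}$. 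The main obstacle is absorbing the interleaving constraints on the non-$y^*$ columns: the full embedding of $P$ in $A$ must respect the prescribed order of all $k$ columns of $P$, not just the shared $y^*$ column, which requires refining the definitions of $C_i(r)$ to carry a partial column assignment and checking that the density-increment step is compatible with this refinement.
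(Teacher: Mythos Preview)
Your plan has a genuine gap, and you have correctly located it yourself in the last sentence. The disjointness claim $C_1(r)\cap C_2(r)=\emptyset$ is \emph{false} as stated: an embedding of $P_1$ into the top $r$ rows with the $y^*$-column at $c$, together with an embedding of $P_2$ into the bottom $n-r$ rows with the $y^*$-column at the same $c$, need \emph{not} glue to an embedding of $P$. The other columns of $P_1$ and $P_2$ land at arbitrary positions, and nothing forces them to interleave in the order prescribed by $P$. For a concrete instance, take $Q_1$ separated between rows 2 and 3, so $y^*=4$: the top part uses columns $1,3,4$ and the bottom part uses columns $2,4$; a combined embedding of $Q_1$ requires the image of column~2 (from the bottom) to fall strictly between the images of columns~1 and~3 (from the top), and you have no control over this. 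So your density-increment step never gets off the ground.

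You suggest patching this by ``refining the definitions of $C_i(r)$ to carry a partial column assignment.'' But this is where the real work lies, and the na\"ive refinement fails: if $C_i(r)$ records the full image of all $k$ columns, there are $\Theta(n^{k-1})$ possible assignments for each choice of the $y^*$-image, and the pigeonhole step collapses. The paper's way around this is to first pass to a \emph{$(k,u)$-complete} submatrix, partitioned into $k$ vertical blocks, and to work only with \emph{block-respecting} embeddings, where column $j$ of $P$ is forced into block $j$. This rigidifies the column order for free, so that two block-respecting embeddings sharing the $y^*$-image really do glue (Lemma~\ref{lem:combine}). The density-increment then happens inside this $(k,u)$-complete setting (Lemma~\ref{main}), and a separate argument (Section~\ref{sec:clean}) shows that any dense enough matrix contains such a $(k,u)$-complete submatrix. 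Your outline is missing both of these ideas, and without them the induction does not close.
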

 
This theorem can be thought of as a common generalization of all previously known results about acyclic patterns, albeit with a somewhat weaker upper bound. It also applies to many new matrices, including all $3\times k$ acyclic patterns and among them $Q_1$ and $Q_2$. For $Q_2$, no bound better than $O(n^{5/3})$ was previously known. (Note that the bound $\ex(n,P)=O(n^{5/3})$ follows from the K\H{o}v\'ari-S\'os-Tur\'an theorem~\cite{KST} for all $3$-by-$k$ patterns $P$, even for non-acyclic ones.) As discussed above, this also implies that every pattern $P$ of weight up to 6 satisfies $\ex(n,P)\le n^{1+o(1)}$.

We will make the $o(1)$ term in Theorem~\ref{thm:main} explicit by showing $\ex(n,P)=n2^{O(\log^cn)}$ for a constant $c\le1-1/l$ (see Theorem~\ref{thm:explicit}). The proof is essentially a density-increment argument that starts with an $n\times n$ matrix $A$ with large weight and shows that either $A$ contains $P$ or it contains a significantly denser submatrix. 
Section~\ref{sec:embed} contains the heart of the inductive proof. Here we restrict our attention to certain well-structured matrices. Our general result is then reduced to this special case in Section~\ref{sec:clean} by finding the necessary well-structured submatrix inside any 0-1 matrix with large enough weight. We finish the paper with some remarks in Section~\ref{sec:remarks}.
\medskip

\noindent{\bf Notation.} Throughout this paper, $\log$ stands for the binary logarithm. As usual, $[n]$ denotes the set $\{1,\dots,n\}$ and $[m,n]=\{m,m+1,\dots,n\}$. We write $\{0,1\}^{m\times n}$ for the set of 0-1 matrices with $m$ rows and $n$ columns. For $A\in\{0,1\}^{m\times n}$ and $I\subs [m]$, $J\subs [n]$ we write $A(I\times J)$ to denote the submatrix of $A$ induced by the rows in $I$ and columns in $J$.

\section{The special case of $(k,u)$-complete 0-1 matrices} \label{sec:embed}

Let $k,m,n,u\in\mathbb N$ and let $A$ be an $m$-by-$kn$ 0-1 matrix. We consider $A$ to be the union of $k$ \emph{vertical blocks} $A([m]\times[(j-1)n+1,jn])$ for $j\in[k]$. We say that $A$ is \emph{$(k,u)$-complete}, if among the $n$ entries in the intersection of any row and any vertical block, one always finds at least $u$ 1-entries.

Let $A$ be an $m$-by-$kn$ $(k,u)$-complete matrix and let
$Q\in\{0,1\}^{l\times k}$ be an $l$-by-$k$ pattern.  If $Q\prec A$ in such a
way that each column of $Q$ ``comes from'' a different vertical block of $A$,
we say $Q$ has a \emph{block-respecting embedding} in $A$. More precisely, a block-respecting embedding of $Q$ is a pair of functions $(f,g)$ such that $f:[l]\to[m]$ is strictly increasing, $g:[k]\to[kn]$ satisfies $(j-1)n<g(j)\le jn$ for all $j\in[k]$ and (in order to make this an embedding) all the 1-entries of $Q$ map to a 1-entry in $A$, that is $A(f(i),g(j))=1$ whenever $Q(i,j)=1$.

The key property of this notion is that block-respecting embeddings of the upper and lower parts of a vertically separated pattern can be easily combined into such an embedding of the whole pattern, as shown by the following lemma.

\begin{lemma}\label{lem:combine}
Let $P\in\{0,1\}^{l\times k}$ be a pattern and let $a\in[l-1]$. Suppose $P'=P([a]\times[k])$ has a block-respecting embedding $(f',g')$ into a $(k,u)$-complete matrix $A\in\{0,1\}^{m\times kn}$, and $P''=P([a+1,l]\times[k])$ has a block-respecting embedding $(f'',g'')$ in $A$. If $f'(a)<f''(1)$, and $g'(b)=g''(b)$ whenever both $P'$ and $P''$ have a 1-entry in some column $b\in[k]$, then $P$ also has a block-respecting embedding in $A$.
\end{lemma}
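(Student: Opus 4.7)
The plan is to simply construct the combined embedding $(f,g)$ from the two given embeddings and verify that it has all the required properties. Since the lemma is essentially a bookkeeping statement, the ``obstacle'' is really just ensuring that the definition of $g$ is consistent on any column where both parts of the pattern $P$ have a 1-entry.

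First I would define $f:[l]\to[m]$ by $f(i)=f'(i)$ for $i\le a$ and $f(i)=f''(i-a)$ for $i>a$. Both $f'$ and $f''$ are strictly increasing on their domains by assumption, and the hypothesis $f'(a)<f''(1)$ bridges the two pieces, so $f$ is strictly increasing on $[l]$. This part is immediate.

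The definition of $g:[k]\to[kn]$ is the only place where a choice arises. For each column $b\in[k]$, I would set
\[
g(b)=\begin{cases} g'(b) & \text{if $P'$ has a 1-entry in column $b$,}\\ g''(b) & \text{if $P'$ has no 1-entry but $P''$ has a 1-entry in column $b$,}\\ g'(b) & \text{otherwise (arbitrary choice in $((b-1)n,bn]$).}\end{cases}
\]
Since $g'$ and $g''$ are both block-respecting, each of $g'(b),g''(b)$ lies in $((b-1)n,bn]$, so $g(b)$ does as well, and the block-respecting condition for $g$ holds. The key consistency check is that when $P'$ and $P''$ both have a 1-entry in column $b$, the two candidate values agree; this is exactly the hypothesis $g'(b)=g''(b)$ supplied by the lemma.

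It remains to verify that every 1-entry of $P$ maps to a 1-entry of $A$. For a 1-entry $(i,j)$ of $P$ with $i\le a$, the entry $(i,j)$ belongs to $P'$, so $P'$ has a 1-entry in column $j$, and hence $g(j)=g'(j)$ by construction. Then $A(f(i),g(j))=A(f'(i),g'(j))=1$ because $(f',g')$ is an embedding of $P'$. The case $i>a$ is symmetric, using $(f'',g'')$ and the fact that $P''$ has a 1-entry in column $j$, so $g(j)=g''(j)$. This completes the construction, so $(f,g)$ is a block-respecting embedding of $P$ into $A$.
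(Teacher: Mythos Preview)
Your proof is correct and follows essentially the same approach as the paper: both construct $(f,g)$ by concatenating $f',f''$ and defining $g(j)$ to be $g'(j)$ when $P'$ has a $1$ in column $j$ and $g''(j)$ otherwise, then verify the block-respecting embedding conditions. Your version simply spells out the verification in more detail than the paper's one-line presentation.
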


\begin{proof}
The two embeddings can be combined into a single block-respecting embedding $(f,g)$ of $P$ in $A$ as follows:
\[f(i)=\left\{\begin{array}{ll}
f'(i)&\textrm{if $i\le a$}\\ f''(i-a)&\textrm{otherwise}
\end{array}\right.\;\;\;\;\;g(j)=\left\{\begin{array}{ll}
g'(j)&\textrm{if $P(i,j)=1$ for some $i\in[a]$}\\g''(j)&\textrm{otherwise.}
\end{array}\right.\]
\end{proof}

We will find block-respecting embeddings of vertically degenerate patterns $P$ in $(k,u)$-complete matrices inductively, starting with single rows and gradually combining them using the above lemma. For this, we will need that $A$ does not have a submatrix that is too dense. Let us start with classifying vertically degenerate patterns.

We call a 0-1 matrix $P$ a \emph{class-$0$} matrix if it has a single row. For $s>0$ we call a 0-1 matrix \emph{class-$s$} if it has a single row or it can be partitioned into two class-$(s-1)$ patterns by a vertical separation. Clearly, all class-$s$ patterns are vertically degenerate and all $l$-by-$k$ vertically degenerate patterns are class-$(l-1)$.

Let $h:\mathbb N\to\mathbb R$ be a function. We call a 0-1 matrix $A$ \emph{$h$-sparse} if for every positive integer $n$, all $n$-by-$n$ submatrices of $A$ have weight at most $n\cdot h(n)$. We will use this definition for the function $h(n)=h_{b,c,d}(n)=d2^{b\log^cn}$ for some positive constants $b$, $d$ and $0<c<1$. 

The following inequality is easy to verify for every $0<m<n$ using elementary calculus:
\begin{equation}\label{bounding}
\frac{h_{b,c,d}(n)}{h_{b,c,d}(m)}>\left(\frac nm\right)^{bc\log^{c-1}n}
\end{equation}

\begin{lemma}\label{main}
Let $h=h_{b,c,d}$ for some fixed positive constants $b,d$ and $0<c<1$. Let $k,m,n,s$ and $u$ be positive integers satisfying $m\le n$ and $u\le h(n)$ such that $x=bc\log^{c-1}n\le 1/10$, and suppose $A\in\{0,1\}^{m\times kn}$ is a $(k,u)$-complete $h$-sparse matrix. If
\[ m\ge40n^{1-x^s}\left(\frac{h(n)}u\right)^2 \]
then every class-$s$ pattern with $k$ columns has a block-respecting embedding in $A$.
\end{lemma}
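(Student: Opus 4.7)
The plan is to prove the lemma by induction on $s$. For the base case $s=0$, the pattern $P$ has a single row: pick any row of $A$ and, for each column $j$ with $P(1,j)=1$, select any $1$-entry in its intersection with the $j$-th vertical block, guaranteed by $(k,u)$-completeness (with $u\ge 1$). This yields a block-respecting embedding.

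For the inductive step, write the class-$s$ pattern $P$ as a vertical separation into class-$(s-1)$ parts $P'$ (top $a$ rows) and $P''$ (bottom $l-a$ rows), and let $b$ denote the at most one column where both parts have a $1$-entry. The plan is to produce block-respecting embeddings of $P'$ and $P''$ in disjoint top/bottom row intervals of $A$ whose images on column $b$ agree (when $b$ exists), and combine them via Lemma~\ref{lem:combine}. When no column is shared, the agreement hypothesis of Lemma~\ref{lem:combine} is vacuous: split the rows of $A$ into top and bottom halves, each of which remains $(k,u)$-complete and $h$-sparse, and invoke the inductive hypothesis on each. The row-count check for this subcase reduces to the arithmetic inequality $n^{x^{s-1}(1-x)}\ge 2$, which the hypotheses $x\le 1/10$ and the rapid growth of $h$ quantified by \eqref{bounding} will ensure.

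In the harder case where a column $b$ is shared, locate a column $c^*$ in the $b$-th vertical block of $A$ with large weight, let $R$ be the set of rows of $A$ containing a $1$ at $c^*$, split $R$ into its top and bottom halves $R'$ and $R''$, invoke the inductive hypothesis on $A(R'\times[kn])$ and $A(R''\times[kn])$ to get block-respecting embeddings $(f',g')$ of $P'$ and $(f'',g'')$ of $P''$, and then reroute by redefining $g'(b)=g''(b)=c^*$. This rerouting is legal because for any row index $i$ used by either embedding, $f'(i)\in R'\subs R$ and $f''(i)\in R''\subs R$, so $A(f'(i),c^*)=A(f''(i),c^*)=1$, and $c^*$ lies in the $b$-th vertical block. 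Lemma~\ref{lem:combine} then assembles a block-respecting embedding of $P$ in $A$.

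The main obstacle is securing $|R|/2\ge 40n^{1-x^{s-1}}(h(n)/u)^2$, so that the inductive hypothesis can be invoked in the shared-column case. A naive averaging argument in the $(k,u)$-complete block $b$ only gives $|R|\ge mu/n$, which falls short by roughly a factor of $n^{1-x^s}$. The factor $n^{1-x^s}$ in the lemma's row-bound is calibrated precisely to absorb this gap: inequality \eqref{bounding}, together with the $h$-sparsity of $A$, forces the existence of a column $c^*$ of weight substantially above the trivial average (or, equivalently, a small subregion in block $b$ with effectively boosted density). The bookkeeping needed to propagate this density-increment cleanly through all $s$ levels of the recursion, while respecting the ambient constraints $m\le n$, $u\le h(n)$, and $x\le 1/10$, is where the technical heart of the argument lies.
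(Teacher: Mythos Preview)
Your base case and the ``no shared column'' subcase are fine, and the rerouting trick through a single column $c^*$ is elegant and would work \emph{if} such a column existed with $|R|/2\ge 40n^{1-x^{s-1}}(h(n)/u)^2$. The gap is your claim that $h$-sparsity together with \eqref{bounding} ``forces the existence of a column $c^*$ of weight substantially above the trivial average''. It does not: $h$-sparsity is an \emph{upper} bound on the weight of square submatrices and says nothing that could push any individual column above the average $mu/n$. Indeed, a $(k,u)$-complete $h$-sparse matrix can have every column of block $b$ with weight exactly $\lceil mu/n\rceil$ or $\lfloor mu/n\rfloor$, and your argument then stalls. Since $mu/n$ can be as small as $40n^{-x^s}h(n)^2/u$, which for small $u$ is nowhere near $n^{1-x^{s-1}}$, this is a genuine obstruction, not a bookkeeping issue.

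The paper's proof takes a different route that uses $h$-sparsity in the opposite direction, as the source of a contradiction rather than as a structural tool. It partitions $A$ into $\beta=\lfloor m/m^*\rfloor$ horizontal blocks $A_i$ of height $m^*\approx 27\cdot 40n^{1-x^{s-1}}(h(n)/u)^2$, and for each $i$ lets $S_i'$ (resp.\ $S_i''$) be the set of \emph{all} column indices in block $b$ that occur as $g(b)$ in some block-respecting embedding of $P'$ (resp.\ $P''$) into $A_i$. Setting $S_i=S_i'\cap S_i''$, either two of the $S_i$ intersect---in which case Lemma~\ref{lem:combine} finishes---or they are pairwise disjoint, so some $|S_i|\le n/\beta$. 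The contrapositive of the induction hypothesis (applied to $A_i$ with the columns of $S_i'$ zeroed out and then thinned to a $(k,\lceil u/3\rceil)$-complete matrix) shows that most rows of $A_i$ have at least $u/3$ of their block-$b$ ones inside $S_i'$, and similarly for $S_i''$; hence more than $m^*/3$ rows have more than $u/3$ ones inside $S_i$. This produces an $m^*\times m^*$ submatrix of weight exceeding $m^*h(m^*)$, contradicting $h$-sparsity. So the missing idea is to work with the full \emph{set} of admissible images of column $b$ across many horizontal slabs and play a disjointness-versus-density dichotomy, rather than committing to a single column up front.
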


\begin{remark} The bound on $m$ in the lemma can be reformulated as
\[ \frac nm\left(\frac{h(n)}u\right)^2\le\frac{n^{x^s}}{40}. \]
Both the ratios $n/m$ and $h(n)/u$ are assumed to be at least 1. This
inequality bounds them from above. If the right-hand side dips below 1, then
the condition is not satisfiable. Therefore, in the proof below, we assume
$n^{x^s}\ge40$.
\end{remark}
\begin{proof}
We proceed by induction on $s$. A pattern $P$ consisting of a single row clearly has
a block-respecting embedding in every row of $A$. We may therefore assume that $s>0$,
and fix a class-$s$ pattern $P\in\{0,1\}^{l\times k}$ that is vertically separable at $a\in[l-1]$ into two class-$(s-1)$ patterns: the upper part $P'=P([a]\times[k])$ and the lower part $P''=P([a+1,l]\times[k])$. Let us choose a $b\in[k]$ so that no column $b'$ with $b\ne b'\in[k]$ has a 1-entry in both parts.

Set $m^*=\left\lceil 3^3\cdot 40n^{1-x^{s-1}}(h(n)/u)^2\right\rceil$ and $\beta=\lfloor m/m^*\rfloor$. 
Here $m\ge m^*$ and hence $\beta\ge 1$, because $n^{x^{s-1}-x^s}=n^{(1/x-1)x^s}\ge n^{9x^s}\ge 40^9\ge 3^3$ (using $x<1/10$ and $n^{x^s}\ge 40$). Recall that $A$ is partitioned into $k$ vertical blocks. We also partition most of $A$ into $\beta$ \emph{horizontal} blocks $A_i=A([(i-1)m^*+1,im^*]\times[kn])$ for $i\in[\beta]$. Note that each horizontal block is a $(k,u)$-complete $h$-sparse matrix with enough rows for the induction hypothesis to ensure by that both $P'$ and $P''$ have block-respecting embeddings in it.

Let $S_i'=\{g(b):(f,g)\textrm{ is a block-respecting embedding of $P'$ in }A_i\}$ be the set of column indices from the $b$'th vertical block that appear in a block-respecting embedding of $P'$ in $A_i$. We define the matrix $A_i'$ contained in $A_i$ as follows. As a first step, we turn every 1-entry of the columns in $S_i'$ into 0. In the second step, we delete all rows that have fewer than $u/3$ 1-entries in the $b$'th vertical block. The first step ensures that $P'$ has no block-respecting embedding in $A_i'$, while the second step makes $A_i'$ a $(k,\lceil u/3\rceil)$-complete matrix. As $A_i'$ is contained in $A$, it is also $h$-sparse. The inductive hypothesis then implies that $A_i'$ has fewer than $m^*/3$ rows, so more than $2m^*/3$ rows have been removed.

We define $S_i''$ and $A_i''$ analogously to $S_i'$ and $A_i'$, but using the block-respecting embeddings of $P''$. We conclude that the number of rows in $A_i''$ is also less than $m^*/3$.

Now let $S_i=S_i'\cap S_i''$. If the sets $S_i$ are \emph{not} pairwise disjoint, then we have a block-respecting embedding of $P$ in $A$, as required. Indeed, suppose that for some $i'<i''$ and $j$ we have $j\in S_{i'}\cap S_{i''}$. Then $j\in S_{i'}'$, and hence, by definition, we have a block-respecting embedding $(f',g')$ of $P'$ in $A_{i'}$ with $g'(b)=j$. Similarly, $j\in S_{i''}''$ means that we have a block-respecting embedding $(f'',g'')$ of $P''$ in $A_{i''}$ with $g''(b)=j$. These embeddings (more precisely, the corresponding embeddings in $A$) satisfy the conditions of Lemma~\ref{lem:combine}, and therefore can be combined into a block-respecting embedding of $P$ in $A$.

Thus, we may assume that the sets $S_i$ are pairwise disjoint. Note that these sets are all contained in the interval of length $n$ corresponding to the $b$'th vertical block, so their average size is at most $n/\beta$. Fix an $i$ with $|S_i|\le n/\beta$ and let $B=A_i([m^*]\times S_i)$. If a row was removed from $A_i$ in the second step of the construction of both $A_i'$ and $A_i''$, then this row contains more than $u/3$ 1-entries in $B$. Indeed, any row of the $(k,u)$-complete matrix $A_i$ contains at least $u$ 1-entries in the $b$'th vertical block, but if a row is removed during the construction of $A'_i$, then fewer than $u/3$ of these 1-entries are outside $S_i'$, and similarly, fewer than $u/3$ of them are outside $S''_i$ if the row was also removed during the construction of $A''_i$.

As we have seen above, at least $2m^*/3$ rows of $A_i$ were removed in the second step of constructing each of $A'_{i}$ and $A''_{i}$, so more than $m^*/3$ rows were removed for both of them. Hence $B$ contains more than $m^*/3$ rows with more than $u/3$ 1-entries, implying $w(B)>um^*/9$. If the number of columns $|S_i|$ of $B$ is less than $m^*$, then let $B^*$ be an $m^*$-by-$m^*$ submatrix of $A$ that contains $B$. Clearly, $w(B^*)\ge w(B)$. Otherwise, let $B^*$ be the $m^*$-by-$m^*$ submatrix of $B$ of maximum weight. By averaging, we have $w(B^*)\ge (m^*/|S_i|)w(B)\ge(\beta m^*/n)w(B)$ in this case. Either way, the weight of our $B^*$ is at least $(\beta m^*/n)w(B)\ge u{m^*}^2\beta/(9n)$. We will show that this value is more than $m^*h(m^*)$. This will then contradict our assumption that the matrix $A$ is $h$-sparse (as $B^*$ is a submatrix of $A$) and finish the proof of the lemma.

The inequality $m^*h(m^*)<u{m^*}^2\beta/(9n)$, or equivalently, $h(m^*)/u<m^*\beta/(9n)$ is now easy to check. In fact, all the bounds in the statement of the lemma were chosen to facilitate this calculation. First of all, \eqref{bounding} implies $h(m^*)<h(n)(m^*/n)^x$ and hence 
\[ \frac{h(m^*)}{u}<\frac{h(n)}u\left(\frac{m^*}n\right)^x. \]
Then using the definition of $m^*$, the assumption $x\le1/10$, the lower bound on $m$, and finally $m^*\beta\ge m/2$, we get
\begin{align*}
\frac{h(m^*)}{u} &\le\frac{h(n)}u\left(1200n^{-x^{s-1}}\left(\frac{h(n)}u\right)^2\right)^x = 1200^xn^{-x^s}\left(\frac{h(n)}u\right)^{2x+1}\\
&<2.1n^{-x^s}\left(\frac{h(n)}u\right)^2<\frac m{18n}\le\frac{m^*\beta}{9n}
\end{align*}
as needed.\end{proof}

\section{Reduction to the special case} \label{sec:clean}

In this section we prove an explicit version of Theorem~\ref{thm:main}:

\begin{theorem}\label{thm:explicit}
For any class-$s$ pattern $P$, we have
\[ \ex(n,P)\le n2^{O(\log^{\frac s{s+1}}n)}. \]
\end{theorem}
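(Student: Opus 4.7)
The plan is to derive Theorem~\ref{thm:explicit} from Lemma~\ref{main} by combining it with a ``cleaning'' step that extracts a $(k,u)$-complete substructure from any sufficiently dense 0-1 matrix. Fix a class-$s$ pattern $P$ with $k$ columns, set $c=s/(s+1)$, and choose positive constants $b,d$ large enough for the estimates below. Let $h=h_{b,c,d}$, so that the target bound $n\cdot 2^{O(\log^{s/(s+1)}n)}$ is $nh(n)$. Suppose for contradiction there is an $n\times n$ $P$-free matrix $A$ with weight exceeding $nh(n)$; by taking $n$ minimal we may also assume $A$ is $h$-sparse, so that the $h$-sparseness hypothesis of Lemma~\ref{main} is available for any submatrix we carve out of $A$, and the technical bound $x=bc\log^{c-1}n\le 1/10$ is automatic for large $n$.

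The heart of the argument -- and, I expect, the main obstacle -- is producing a $(k,u)$-complete submatrix $B\prec A$ of dimensions $m\times kn_1$ with $m\ge 40\,n_1^{1-x^s}(h(n_1)/u)^2$, so that Lemma~\ref{main} gives a block-respecting embedding of $P$ into $B$ and contradicts $P$-freeness. To build $B$ I would partition the columns of $A$ into $K$ equal vertical intervals (for some $K\ge k$ to be chosen), call a (row, interval) pair \emph{heavy} if it contains at least $u$ 1-entries, and let $g_i$ be the number of heavy intervals in row $i$. The contribution of non-heavy pairs to the weight of $A$ is at most $nKu$, so taking $u$ somewhat below $h(n)/K$ forces $\sum_i g_i=\Omega(Kh(n))$. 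A convexity (Jensen) estimate on $\sum_i\binom{g_i}{k}$, combined with pigeonholing over the $\binom{K}{k}$ possible $k$-subsets of intervals, then yields many rows that are simultaneously heavy in the same $k$ intervals; restricting $A$ to these rows and these intervals gives $B$.

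The choice of $K$, $u$, and $n_1=n/K$ is what drives the exponent $c=s/(s+1)$. Unwinding $x^s=(bc)^s\log^{-s/(s+1)}n$, so $n^{x^s}=2^{(bc)^s\log^{1/(s+1)}n}$, the slack $n_1^{1-x^s}$ in the hypothesis of Lemma~\ref{main} is only polylogarithmically smaller than $n_1$, and this slack is essentially what the convexity step provides -- but only if $c=s/(s+1)$ and $b$ is chosen large enough relative to $k$ and the implicit constants in the cleaning. Verifying this algebra, and in particular checking that the $m$ produced by cleaning meets the bound $40\,n_1^{1-x^s}(h(n_1)/u)^2$ required by Lemma~\ref{main}, is the technical content of Section~\ref{sec:clean}; once it is in place, the contradiction is immediate and $\ex(n,P)\le nh(n)=n\cdot 2^{O(\log^{s/(s+1)}n)}$ follows.
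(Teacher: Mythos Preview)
Your outline follows the paper's strategy closely---minimal counterexample to get $h$-sparseness, partition the columns into blocks, pigeonhole on $k$-tuples of blocks to extract a $(k,u)$-complete submatrix, then invoke Lemma~\ref{main}---but the cleaning step as you describe it has a genuine gap, not just deferred algebra.

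You classify boxes only as heavy ($\ge u$ entries) or non-heavy, bound the non-heavy contribution by $nKu$, and conclude $\sum_i g_i=\Omega(Kh(n))$. That estimate is correct: it comes from dividing the remaining weight $\Omega(nh(n))$ by the \emph{trivial} upper bound $n/K$ on the weight of a single box. But $\Omega(Kh(n))$ is far too weak for the pigeonhole. For the choice of $K$ that makes the final comparison with Lemma~\ref{main} work (namely $K$ of order $2^{O(\log^{1/(s+1)}n)}$, so that $n_1=n/K$ stays close to $n$), one has $Kh(n)\ll n$, hence the average $g_i$ is $o(1)$; in that regime $\sum_i\binom{g_i}{k}$ can be zero (all the heavy boxes could sit in $o(n)$ rows with $g_i=1$). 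Pushing $K$ up to $\sim n/h(n)$ to make the average $g_i$ of constant order instead forces $u\sim h(n)^2/n<1$, which is impossible.

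The missing idea is that $h$-sparseness must be used a second time, \emph{inside} the cleaning step, to cap box weights from above. The paper introduces a third class: boxes with weight exceeding $h(n)/(6k)$ are declared ``heavy'' and, because any $n^*$ of them in one block would form an $n^*\times n^*$ submatrix violating $h$-sparseness, they occupy at most $n^*$ rows per block and hence carry total weight at most $\alpha n^* h(n^*)<nh(n)/(6k)$. After discarding both light and heavy boxes, the remaining ``regular'' boxes each have weight at most $h(n)/(6k)$, and \emph{now} dividing gives at least $kn$ of them---enough so that $\sum_i\binom{r_i}{k}\ge\sum_i r_i-(k-1)n\ge n$ (no Jensen needed) and the pigeonhole yields $m\ge n/\alpha^k$ rows. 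The upper cap on box weight, not just the lower threshold $u$, is what makes the count come out right.
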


\begin{proof} For class-0 (single-row) patterns $P$, the theorem claims $\ex(n,P)=O(n)$. This clearly holds for all such patterns, because any $n$-by-$n$ matrix of weight at least $kn$ contains a row with $k$ 1-entries.
Let $s\ge 1$ and fix a class-$s$ pattern $P\in\{0,1\}^{l\times k}$ with $l,k\ge2$. We will prove the theorem by showing that $\ex(n,P)\le nh(n)$ holds for all $n$ with $h(n)=h_{b,c,d}(n)=d2^{b\log^cn}$, where $c=s/(s+1)$, and $b=b(c,P)$ and $d=d(b,c,P)$ are constants to be chosen later as follows.

Our proof proceeds by analyzing a (hypothetical) minimal counterexample, and finds a contradiction if $b$ is large enough in terms of $c$ and $P$. However, this argument only works if this minimal counterexample has size at least $n_0=n_0(b,c,P)$ that only depends on $b,c$ and $P$. By choosing $d$ large enough, we can make sure that $h(n)\ge n$ and hence $\ex(n,P)\le nh(n)$ holds for every $n\le n_0$, guaranteeing that a minimal counterexample has size at least $n_0$.

\smallskip
So suppose for contradiction that $\ex(n,P)\le nh(n)$ does not always hold, and let $n$ be the smallest integer violating this inequality. Then there is a matrix $A\in\{0,1\}^{n\times n}$ such that $P$ is not contained in $A$, the weight of $A$ is $w(A)>nh(n)$, but any proper submatrix of $A$ is $h$-sparse.

Let $x=bc\log^{c-1}n$ be the exponent in the estimate~\eqref{bounding} and set $n^*=\left\lfloor n/(6k)^{1/x}\right\rfloor$. Then \eqref{bounding} implies $h(n^*)<\frac{h(n)}{6k}$, and we also have $x\le 1$ and hence $1\le n^*<n/k$ if $n$ is large enough. Set $\alpha=\lfloor n/n^*\rfloor\ge k$ and let $B$ be the $n$-by-$\alpha n^*$ submatrix of $A$ formed by the $\alpha n^*$ columns of largest weight in $A$. Then $w(B)\ge\frac{\alpha n^*}{n}w(A)>\frac{nh(n)}{2}$. We partition the matrix $B$ into $\alpha$ \emph{vertical blocks}, each consisting of $n^*$ consecutive columns. We call the intersection of a vertical block and a row a \emph{box}, so the matrix $B$ consists of $\alpha n$ boxes. The \emph{weight} of a box is the number of 1-entries in it. We distinguish three classes of boxes according to their weight:
We say that a box is \emph{light} if its weight is below $h(n^*)/\alpha$,
\emph{heavy} if its weight is over $h(n)/(6k)$,
and \emph{regular} if it is neither heavy, nor light.
\smallskip

The total weight of all light boxes is at most $\alpha n(h(n^*)/\alpha)=nh(n^*)<nh(n)/(6k)$.

If some vertical block contained $n^*$ heavy boxes, then we could form an $n^*$-by-$n^*$ submatrix of weight over $n^*h(n)/(6k)>n^*h(n^*)$, contradicting the assumption that each proper submatrix of $A$ is $h$-sparse. Therefore, we can form $n^*$-by-$n^*$ submatrices, one in each vertical block, that together contain all the heavy boxes. By the same sparsity condition, each of these matrices have weight at most $n^*h(n^*)$. This makes the total weight of the heavy boxes at most $\alpha n^*h(n^*)<nh(n)/(6k)$.

The total weight of regular boxes is $w(B)$ minus the total combined weight of heavy and light boxes. By the above calculations, this is at least $nh(n)/6$, therefore the number of regular boxes is at least $\frac{nh(n)}{6}/\frac{h(n)}{6k}=kn$. Let $r_i$ be the number of regular boxes in row $i$, so we have $\sum_{i=1}^nr_i\ge kn$. 
Let us say that a row is good for a $k$-set $X$ of vertical blocks if it has a regular box in each block in $X$. Note that a row $i$ is good for $\binom{r_i}k\ge r_i-(k-1)$ $k$-sets, so on average
\[ \frac{\sum_{i=1}^n\binom{r_i}k}{\binom\alpha k}\ge\frac{\sum_{i=1}^nr_i-(k-1)n}{\binom\alpha k}\ge\frac n{\alpha^k}. \]
rows are good for a $k$-set of vertical blocks. We fix a $k$-set $X$ of blocks
such that at least $m^*=\lceil n/\alpha^k \rceil$ rows are good for it. Let $T$ be a set of $m^*$ such rows and let $S$ be the set of columns in the vertical blocks of $X$. Then $C=B(T,S)$ is an $m^*$-by-$kn^*$ $(k,u^*)$-complete submatrix for $u^*=\lceil h(n^*)/\alpha\rceil$.

\smallskip

We finish the proof of the theorem by showing that the matrix $C$ satisfies the conditions of Lemma~\ref{main}, yet it violates its statement.

We have already observed that $C$ is $(k,u^*)$-complete and (as a proper submatrix of $A$) it is also $h$-sparse. On the other hand, $P$ does not have a block-respecting (or any) embedding in $C$, as otherwise $A$ would also contain $P$, which we assumed not to be the case. Note that we also have $u^*=\lceil h(n^*)/\alpha\rceil \le h(n^*)$ and $m^*=\lceil n/\alpha^k \rceil\le \lceil n/(2\alpha)\rceil\le n^*$. So to get a contradiction from Lemma~\ref{main}, it is enough to show that for an appropriately chosen $b$,
\[ m^*\ge 40{n^*}^{1-{x^*}^s}\left(\frac{h(n^*)}{u^*}\right)^2, \]
with $x^*=bc\log^{c-1}n^*<1/10$. 

Using $n^*<n$, $x^*>x$, $u^*\ge h(n^*)/\alpha$ and $m^*\ge n/\alpha^k$, we obtain
\[
\frac{40{n^*}^{1-{x^*}^s}\left(\frac{h(n^*)}{u^*}\right)^2}{m^*} \le \frac{40n^{1-x^s}\left(\frac{h(n^*)}{h(n^*)/\alpha}\right)^2}{n/\alpha^k} = 40\alpha^{k+2}n^{-x^s}.
\]
To prove that the right-hand side does not exceed 1, we will show that its logarithm is negative. In the calculation we use $\alpha\le n/n^*<2(10k)^{1/x}$ and $x=bc\log^{c-1}n=bc/\log^{1/(s+1)}n$:
\begin{align*}
\log(40\alpha^{k+2}n^{-x^s})&<6+(k+2)\log\alpha-x^s\log n\\
&<6+(k+2)\log(20k)/x-x^s\log n\\
&=6+\left(\frac{(k+2)\log(20k)}{bc}-b^sc^s\right)\log^{\frac1{s+1}}n.
\end{align*}
Then indeed, choosing $b=(k+2)\log(20k)/c$ makes this last expression negative for any parameters $k,s\ge 1$ and for all $n>1$.

Finally, the condition $x^*<1/10$ is equivalent to $n^*>2^{(10bc)^{s+1}}$, which clearly holds if $n$ (and hence $n^*$) is large enough.

This brings us to a contradiction whenever $n\ge n_0$, where the threshold $n_0$ only depends on $b,c$ and $P$. But as we mentioned above, we can choose the parameter $d$ so that a minimal counterexample would definitely satisfy $n\ge n_0$.
\end{proof}

\section{Concluding remarks} \label{sec:remarks}

Note that both of the acyclic weight-6 patterns $Q_1$ and $Q_2$ are class-2 patterns but not class-1 patterns. Theorem~\ref{thm:explicit} gives an $n2^{O(\log^{2/3}n)}$ bound for their extremal functions. As mentioned in the introduction, no non-trivial bound was known for $\ex(n,Q_2)$, but as a preliminary result of this present work, the bound $\ex(n,Q_1)=n2^{O(\sqrt{\log n\log\log n})}$ was proved in the Master's thesis of the fourth author \cite{W09}. The slightly better bound of
\begin{equation} \label{eq:q1}
\ex(n,Q_1)=n2^{O(\sqrt{\log n})}
\end{equation}
can also be proved using the techniques of Section~\ref{sec:clean} if instead of Lemma~\ref{main}, we use the following simple and elementary fact: \emph{If $m>2n/u$, then $Q_1$ has a block-respecting embedding in every $(4,u)$-complete matrix $A\in\{0,1\}^{m\times4n}$.}

In fact, we believe that our methods can be used to prove a slightly stronger variant of Theorem~\ref{thm:explicit} where the same bounds are claimed but the pattern classes are defined in the following, more relaxed, way. Once again, class-0 are those with a single row, but for $s\ge 1$, we call all patterns class-$s$ that are partitioned into class-$(s-1)$ patterns when \emph{all} the possible vertical separations (i.e., the horizontal cut that each destroy at most one vertical edge) are applied simultaneously. With this definition, $Q_1$ is class-1 because it can be partitioned into its three rows in one step, so \eqref{eq:q1} is implied by this stronger variant of our theorem. However $Q_2$ is still not class-1. To keep this paper simple, we decided not to work out the details of this argument.
\medskip

It is clear that transposing a matrix does not change its extremal number: $\ex(n,P)=\ex(n,P^T)$. In particular, Theorem~\ref{thm:main} holds for the analogously defined \emph{horizontally degenerate} patterns, as well. The smallest acyclic pattern that is neither horizontally, nor vertically degenerate (and hence not covered by our theorem) is the following 4-by-4 ``pretzel''-like matrix (again omitting 0-entries for clarity):
\begin{center}
\begin{tikzpicture}
\matrix[matrix of math nodes, left delimiter={(}, right delimiter={)},label=left:{$R=$~~~}] (A) at (0,0) {
  1 &   &   & 1 \\
    & 1 &   &   \\
  1 &   & 1 &   \\
    & 1 &   & 1 \\
};
\draw[thick,opacity=.3] (A-2-2.center) -- (A-4-2.center) -- (A-4-4.center) -- (A-1-4.center) -- (A-1-1.center) -- (A-3-1.center) -- (A-3-3.center); 
\end{tikzpicture}
\end{center}
It would be very interesting to obtain nontrivial estimates on $\ex(n,R)$.

$R$ is neither horizontally nor vertically separable. However, our approach already breaks down for matrices that are degenerate in a weaker sense, namely that every submatrix is separable, but whether it is horizontally or vertically separable depends on the submatrix. This class of patterns, including the following spiral-like pattern $S$, might be easier to handle than $R$, and estimating their extremal numbers would certainly be a natural next step towards Conjecture~\ref{conj:mainmx}.

\begin{center}
\begin{tikzpicture}
\matrix[matrix of math nodes, left delimiter={(}, right delimiter={)},label=left:{$S=$~~~}] (A) at (0,0) {
  1 &   & 1 &   & 1 \\
    & 1 &   & 1 &  \\
  1 &   &   &   &   \\
    & 1 &   &   & 1 \\
};
\draw[thick,opacity=.3] (A-2-4.center) -- (A-2-2.center) -- (A-4-2.center) -- (A-4-5.center) -- (A-1-5.center) -- (A-1-3.center) -- (A-1-1.center) -- (A-3-1.center); 
\end{tikzpicture}
\end{center}
\medskip

In terms of ordered graphs of interval chromatic number 2, vertically separable means that the the vertices of the first interval $A$ can be split into two subintervals $A_1$ and $A_2$ such that there is at most one vertex in the second interval $B$ that has neighbors in both $A_1$ and $A_2$. An ordered graph $G$ is then vertically degenerate if every ordered subgraph of $G$ is vertically separable.

Pach and Tardos \cite{PT06} conjectured that every unordered graph $G_0$ has an ordering such that $\ex_<(n,G)\le O(\ex(n,G_0)\log n)$, where $\ex(n,G_0)$ stands for the usual, unordered Tur\'an number of $G_0$. In similar spirit, note that in any acyclic ordered graph of interval chromatic number 2, we can rearrange the vertices of the first interval to get a vertically degenerate graph. Or in terms of matrices, we can make any acyclic pattern vertically degenerate by permuting its rows. This implies the following:

\begin{corollary}
Let $P$ be an acyclic pattern. We can rearrange the rows of $P$ such that the resulting pattern $P'$ satisfies $\ex(n,P')\le n^{1+o(1)}$.
\end{corollary}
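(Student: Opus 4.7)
The plan is to reduce the corollary to Theorem~\ref{thm:main} via the following claim, which was noted informally in the paragraph preceding the corollary: every acyclic pattern $P$ admits a row permutation producing a vertically degenerate pattern $P'$. Theorem~\ref{thm:main} then directly yields $\ex(n, P') \le n^{1+o(1)}$, which is exactly the statement we need.

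To prove the claim, I would view $P$ as its bipartite ``1-entry graph'' $G$ on parts $R$ (rows) and $B$ (columns), where $B$ retains its fixed left-to-right order; acyclicity of $P$ means that $G$ is a forest. The desired linear order on $R$ is then built by induction on $|R|$, with the case $|R| \le 1$ trivial. For the inductive step the target is a partition $R = R_1 \sqcup R_2$ into two nonempty parts such that at most one column of $B$ has neighbors in both parts. Placing $R_1$ above $R_2$ makes the boundary a valid vertical separation of the eventual matrix, and applying the induction hypothesis to the bipartite forests induced on $(R_1, B)$ and $(R_2, B)$ (which have strictly fewer rows) produces vertically degenerate orderings of each half; the three pieces combine to a vertically degenerate ordering of all of $R$.

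Producing the partition is the heart of the argument. If $G$ is disconnected as a bipartite graph, this is free: take $R_1$ to be the rows in any single connected component containing at least one row and $R_2$ the remaining rows, so no column has neighbors in both parts. Otherwise $G$ is a tree with $|R| \ge 2$, and I would fix two distinct rows $r_1, r_2$, look at the unique $r_1$--$r_2$ path in the tree, and let $e = (r, c)$ be the first edge along this path. Removing $e$ splits the tree into subtrees $T_1 \ni r$ and $T_2 \ni c$ with $r_1 \in T_1$ and $r_2 \in T_2$, so $R_i := T_i \cap R$ are both nonempty. Because every edge of $G$ other than $e$ lies within one of the two subtrees, only the column $c$ has neighbors in both $R_1$ and $R_2$, giving the required vertical separation; and the induced bipartite graphs on $(R_1, B)$ and $(R_2, B)$ are, respectively, $T_1$ together with $c$ attached as a pendant and $T_2$ itself, both forests on fewer rows, so the induction continues.

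The only real obstacle is this last observation: cutting the tree at an arbitrary edge can leave all rows on one side---for instance if $c$ is a column of degree one---and this would stall the recursion. Routing $e$ along a row-to-row path in the tree is the one combinatorial idea needed to sidestep this, after which the rest of the argument is essentially mechanical bookkeeping.
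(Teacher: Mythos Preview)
Your proposal is correct and follows exactly the paper's approach: reduce to Theorem~\ref{thm:main} by showing that the rows of any acyclic pattern can be permuted to make it vertically degenerate. The paper merely asserts this row-permutation fact without proof, and your inductive argument (cut the forest at the first edge of a row-to-row path) correctly supplies the omitted details; the only cosmetic wrinkle is that your ``disconnected'' case should read ``rows lie in more than one component'', but your tree argument already handles the remaining situation since it only needs a path between two row vertices.
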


Note that if we were allowed to permute the columns, as well, then we could easily get $\ex(n,P')\le n(\log n)^{O(1)}$. Indeed, we would then be able make the last column (or row) end up with a single 1-entry, and then apply induction using \eqref{eq:col}.

\end{document}